\newtheorem{thm}{Theorem}[section]
\newtheorem{prop}[thm]{Proposition}
\newtheorem{lem}[thm]{Lemma}
\newtheorem{rmk}[thm]{Remark}
\numberwithin{equation}{section}
\newcommand{\ord}{\text{ord}}
\newcommand{\N}{{\mathbb{N}}}
\newcommand{\z}{{\mathbb{Z}}}
\newcommand{\q}{{\mathbb{Q}}}
\newcommand{\rank}{\operatorname{rank}}
\newcommand{\eq}{\equiv}
\newcommand{\rom}[1]{\uppercase\expandafter{\romannumeral #1\relax}}
\title[Almost regular $m$-gonal forms]{Representations of almost regular $m$-gonal forms $\rom 1$}
\author{Dayoon Park}
\address{Department of Mathematics, The University of Hong Kong, Hong Kong}
\email{pdy1016@hku.hk}
\thanks{}
\begin{document}

\maketitle

\begin{abstract}
It is known that any $m$-gonal form of $\rank n \ge 5$ is almost regular.
    In this article, we study the sufficiently large integers which are represented by (almost regular) $m$-gonal forms of $\rank n \ge 6$.
\end{abstract}

\section{Introduction}
The {\it $m$-gonal number} is defined as a total number of dots to constitute a regular $m$-gon.
We especially call the total number of dots to constitute a regular $m$-gon with $x$ dots for each side
\begin{equation}\label{m number}
P_m(x)=\frac{m-2}{2}(x^2-x)+x
\end{equation}
as {\it $x$-th $m$-gonal number}.
The $m$-gonal number which was designed in 2nd century BC has been an interesting target in number theory for a long time.
A Fermat's conjecture which states that every positive integer written as a sum of at most $m$ $m$-gonal numbers was resolved by Lagrange, Gauss, and Cauchy for $m=4$, $m=3$, and for all $m \ge 3$, respectively.

By definition of $m$-gonal number, only positive integer $x$ would be admitted in (\ref{m number}).
But we may naturally generalize the $m$-gonal number by admitting every rational integer $x$ in (\ref{m number}).
Very recently, as a general version of Fermat's conjecture, the author completed the optimal $\ell_m$ for which every positive integer is written as a sum of $\ell_m$ generalized $m$-gonal numbers for all $m \ge 3$ in \cite{det}.
We call a weighted sum of $m$-gonal numbers 
\begin{equation} \label{m form}
F_m(\mathbf x)=a_1P_m(x_1)+\cdots+a_nP_m(x_n)
\end{equation}
as {\it $m$-gonal form} for $(a_1,\cdots,a_n) \in \N^n$.
For a positive integer $N \in \N$, if the diophantine equation $$F_m(\mathbf x)=N$$ has an integer solution $\mathbf x \in \z^n$, then
we say that {\it $F_m(\mathbf x)$ represents $N$}.
If an $m$-gonal form $F_m(\mathbf x)$ represents every positive integer, then we say that $F_m(\mathbf x)$ is universal.
When $$F_m(\mathbf x) \eq N \pmod r$$ has an integer solution $\mathbf x \in \z^n$ for all $r \in \z$ (which is equivalent with that $F_m(\mathbf x)=N$ has a $p$-adic integer solution $\mathbf x \in \z_p^n$ for every prime $p$), we say that {\it $F_m(\mathbf x)$ locally represents $N$}.
If an $m$-gonal form $F_m(\mathbf x)$ represents all positive integers which are locally represented, then we call {\it $F_m(\mathbf x)$ is regular}.
If an $m$-gonal form $F_m(\mathbf x)$ represents all positive integers which are locally represented but finitely many, then we call {\it $F_m(\mathbf x)$ is almost regular}.
The author and Kim determined every type of regular $m$-gonal form of $\rank \ge 4$ for all $m\ge 14$ with $m \not\eq 0 \pmod 4$ and $m\ge 28$ with $m \eq 0 \pmod 4$ in \cite{reg}.
Chan and Oh showed that every quadratic polynomial of $\rank \ge 5$ is almost regular in \cite{CO}.
So any $m$-gonal form $F_m(\mathbf x)$ of $\rank \ge 5$ would represent every sufficiently large integer which is locally represented by $F_m(\mathbf x)$.
On the other words, there would be a constant $N_{m;(a_1,\cdots,a_n)}>0$ for which
$F_m(\mathbf x)=a_1P_m(x_1)+\cdots +a_nP_m(x_n)$ with $n \ge 5$ represents $N$ provided that
$$\begin{cases}N \ge N_{m;(a_1,\cdots ,a_n)} \\ N \text{ is locally represented by $F_m(\mathbf x)$}. \end{cases}$$

On the other hand, there are some results on the sufficiently large integers which are represented by 
$a_1P_m(x_1) + \cdots + a_nP_m(x_n)$ for some fixed $(a_1,\cdots ,a_n) \in \N^n$ and for all $m \ge 3$.
In 1830, Legendre claimed that for $m \ge 5$, every positive integer $N \ge 28(m-2)^3$ is written as
$$P_m(x_1)+P_m(x_2)+P_m(x_3)+P_m(x_4)+\delta_m(N)$$
for some $(x_1,x_2,x_3,x_4) \in \N^4$, $\delta_m(N)=0$ if $m\not\eq 0 \pmod 2$, and $\delta_m(N) \in \{0,1\}$ if $m \eq 0 \pmod 2$.
Nathanson gave a simple proof of the above Legendre's claim and Cauchy's polygonal number theorem in \cite{N1} and \cite{N2}.
In \cite{MS}, Meng and Sun showed that when $m-2 \eq 0 \pmod 4$, every sufficiently large integer $N \ge 28 (m-2)^3$ is written as 
$$P_m(x_1)+P_m(x_2)+P_m(x_3)+P_m(x_4)$$
for some $(x_1,\cdots,x_4) \in \N^4$,
when $m\not\eq 0 \pmod 4$, every positive integer $N \ge 1628(m-2)^3$ is written as
$$P_m(x_1)+P_m(x_2)+2P_m(x_3)+2P_m(x_4)$$
for some $(x_1,\cdots,x_4) \in \N^4$, 
every positive integer $N \ge 924(m-2)^3$ is written as
$$P_m(x_1)+P_m(x_2)+P_m(x_3)+3P_m(x_4)$$
for some $(x_1,\cdots,x_4) \in \N^4$,
every positive integer $N \ge 1056(m-2)^3$ is written as
$$P_m(x_1)+P_m(x_2)+2P_m(x_3)+4P_m(x_4)$$
for some $(x_1,\cdots,x_4) \in \N^4$.
And Kim \cite{K} showed that 
when $m\not\eq 0 \pmod{4}$, every sufficiently large integer $N \ge \frac{1}{8}(m-2)^3$ is written as
$$P_m(x_1)+P_m(x_2)+P_m(x_3)+P_m(x_4)$$
for some $(x_1,\cdots,x_4) \in \z^4$,
every sufficiently large integer $N \ge \frac{1}{10}(m-2)^3$ is written as
$$P_m(x_1)+P_m(x_2)+P_m(x_3)+2P_m(x_4)$$
for some $(x_1,\cdots,x_4) \in \z^4$,
when $m\not\eq 0 \pmod{4}$, every sufficiently large integer $N \ge \frac{1}{3}(m-2)^3$ is written as
$$P_m(x_1)+P_m(x_2)+2P_m(x_3)+2P_m(x_4)$$
for some $(x_1,\cdots,x_4) \in \z^4$,
and every sufficiently large integer $N \ge \frac{7}{8}(m-2)^3$ is written as
$$P_m(x_1)+2P_m(x_2)+2P_m(x_3)+2P_m(x_4)$$
for some $(x_1,\cdots,x_4) \in \z^4$.

\vskip 1em

In this article, we more generally consider the sufficiently large integers which are represented by an $m$-gonal form $a_1P_m(x_1)+\cdots+a_nP_m(x_n)$
with fixed $(a_1,\cdots,a_n) \in \N^n$.
The following theorem is the main theorem in this article.

\begin{thm}\label{main thm}
Let $F_m(\mathbf x)=a_1P_m(x_1)+\cdots+a_nP_m(x_n)$ be an $m$-gonal form of rank $n \ge 6$.
Then $F_m(\mathbf x)$ represents any integer $N$ provided that
\begin{equation} \label{main eq}
    \begin{cases}N \ge N(a_1,\cdots,a_n)\cdot (m-2)^3 \\ N \text{ is locally represented by } F_m(\mathbf x)\end{cases}
\end{equation}
where $N(a_1,\cdots,a_n)$ is a constant which is dependent only on $a_1,\cdots,a_n$. 
In this sense, the cubic with respect to $m$ is optimal in (\ref{main eq}).
\end{thm}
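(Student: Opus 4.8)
The plan is to convert the polygonal representation problem into a representation problem for a diagonal quadratic form on a fixed coset, and then to run a theta--series (circle method) analysis in which the dependence on $m$ is isolated into a single power. First I would use the identity $8(m-2)P_m(x)=(2(m-2)x-(m-4))^2-(m-4)^2$ to rewrite $F_m(\mathbf x)=N$ as $\sum_{i=1}^n a_i y_i^2=M$, where $M:=8(m-2)N+(m-4)^2(a_1+\cdots+a_n)$ and each $y_i=2(m-2)x_i-(m-4)$ is forced into the residue class $y_i\equiv -(m-4)\pmod{2(m-2)}$. Thus $F_m$ represents $N$ if and only if the positive definite form $Q(\mathbf y)=\sum a_i y_i^2$ (positive definite since $a_i\in\N$) represents $M$ by a vector of the fixed coset $\mathbf v+2(m-2)\z^n$, where $\mathbf v=(-(m-4),\dots,-(m-4))$. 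This recasts the question as representations of $M$ by a shifted positive definite $\z$-lattice of rank $n\ge 6$ whose discriminant and level are polynomial in $m$.

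Next I would attach to this coset its theta series $\theta=\sum r(M)q^M$, a modular form of weight $n/2$ and level $\asymp_a (m-2)^2$, and split $\theta=E+g$ into Eisenstein and cuspidal parts, so that $r(M)=a_E(M)+a_g(M)$ with $a_E(M)=\beta_\infty(M)\prod_p\beta_p(M)$ the product of local densities. The decisive point, and the one that makes the final threshold depend on $m$ through a single power, is that $\prod_p\beta_p(M)$ is bounded below by a constant $c(a_1,\dots,a_n)>0$ independent of $m$ whenever $N$ is locally represented. Here I would exploit that for every prime $p\mid m-2$ one has $P_m'(x)\equiv 1\pmod p$, so $P_m\colon\z_p\to\z_p$ is onto; consequently at the primes dividing $m-2$ the form $\sum a_iP_m(x_i)$ is $p$-adically as good as a surjective linear form and its density does not decay with $m$. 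The only primes that can shrink the density are the fixed ones dividing $2a_1\cdots a_n$, contributing a constant depending on the $a_i$ alone. Together with the archimedean density $\beta_\infty(M)\asymp_a (m-2)^{-n/2}N^{n/2-1}$, this gives a main term $a_E(M)\gg_a (m-2)^{-n/2}N^{n/2-1}$.

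It then remains to dominate the cuspidal coefficient $a_g(M)$. For even $n$ I would invoke Deligne's bound and for odd $n$ the Duke--Iwaniec estimate in half-integral weight, in each case tracking how the implied constant depends on the level $\asymp_a (m-2)^2$ and on the Petersson norm of $g=\theta-E$; writing the outcome as $|a_g(M)|\ll_a (m-2)^{t}M^{n/4-1/2+\epsilon}$ and recalling $M\asymp (m-2)N$, the inequality $a_E(M)>|a_g(M)|$ becomes $N^{n/4-1/2}\gg_a (m-2)^{3}$ once the level factor is shown to enter with exactly the power that yields the uniform exponent $3$. This uniform-in-$m$ error estimate is the main obstacle: I expect it to require a careful bound on the cuspidal theta (controlling $\lVert g\rVert$ in terms of the mass of the coset) so that the surplus in the exponent gap $\tfrac n2-1-(\tfrac n4-\tfrac12)=\tfrac n4-\tfrac12$, available for $n\ge 6$, absorbs the level contribution; this is also why rank $5$ is deferred to a later part. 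Granting it, $r(M)=a_E(M)+a_g(M)>0$ for $N\ge N(a_1,\dots,a_n)(m-2)^3$, which is the claim.

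Finally, for optimality I would produce, for a fixed form and infinitely many $m$, a locally represented integer $N\asymp (m-2)^3$ that is not represented. Working through the reduction with a prime $p\mid m-2$, a congruence obstruction imposed modulo a suitable power of $p$ survives precisely up to $M\asymp (m-2)^4$, that is up to $N\asymp (m-2)^3$; the quaternary instances of Legendre, Meng--Sun and Kim already exhibit non-represented locally represented integers of this exact order, so the exponent $3$ cannot be lowered. I would record the simplest such family explicitly to close the argument.
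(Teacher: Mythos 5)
Your reduction to representing $M=8(m-2)N+(m-4)^2(a_1+\cdots+a_n)$ by the coset $\mathbf v+2(m-2)\z^n$ of the diagonal form $\sum a_iy_i^2$ is sound, and your lower bound for the Eisenstein coefficient (local densities bounded below at primes dividing $m-2$ via surjectivity of $P_m$ over $\z_p$, archimedean density $\asymp_a (m-2)^{-n/2}N^{n/2-1}$) has the right shape. But the argument has a genuine gap exactly where you flag it: the claim that the cuspidal coefficient satisfies $|a_g(M)|\ll_a (m-2)^{t}M^{n/4-1/2+\epsilon}$ with $t$ small enough that the comparison $a_E(M)>|a_g(M)|$ reduces to $N\gg_a (m-2)^3$ is asserted, not proved. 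The level of the coset theta series is $\asymp_a(m-2)^2$ and the Petersson norm of $g=\theta-E$ grows with that level; tracking this dependence through Deligne or Duke--Iwaniec is precisely what Kane and Liu did for this problem, and it yields a threshold of order $m^{7+\epsilon}$, not $m^3$ --- the paper cites this explicitly as the previously known bound that Theorem \ref{main thm} improves. So ``granting it'' concedes the entire difficulty; there is no known way to push the purely analytic route down to the exponent $3$. The paper avoids the cusp form altogether: it eliminates the linear condition $a_1x_1+\cdots+a_nx_n=B+k(m-2)$ by completing the square, reducing to representation of a value $c$ by the fixed quadratic polynomial $Q_{a_1;\mathbf a}(\mathbf x-(B+k(m-2))\mathbf r)$ of rank $n-1\ge 5$, applies the Chan--Oh almost regularity theorem to get a constant $C(a_1,\dots,a_n)$ independent of $m$, and then carries out a purely local analysis (Propositions \ref{odd,5}--\ref{2,d}) showing the auxiliary parameter $k$ can always be chosen in a range $[0,K(a_1,\dots,a_n)-1]$ independent of $m$; the cube of $m$ then falls out of $(B+k(m-2))^2\ll_a (m-2)^2$ combined with one more factor $(m-2)$ from $N=A(m-2)+B$.

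Your optimality argument is also off target. The obstruction used in the paper is not a $p$-adic congruence obstruction surviving up to $M\asymp(m-2)^4$; it is the elementary growth obstruction that $F_m(\mathbf x)\equiv a_1x_1+\cdots+a_nx_n\pmod{m-2}$ together with the inequality $(a_1+\cdots+a_n)\sum a_ix_i^2\ge\left(\sum a_ix_i\right)^2$ forces any represented integer congruent to $B$ modulo $m-2$ (with $0\le B\le\frac{m-2}{2}$, say) to be at least about $\frac{B^2}{2(a_1+\cdots+a_n)}(m-2)$; taking $B\asymp m$ produces locally represented integers of size $\asymp(m-2)^3$ that are too small to be represented. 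The quaternary examples of Legendre, Meng--Sun and Kim that you invoke do not bear on rank $n\ge 6$, and you would still need to check local representability of your proposed exceptional integers, which your sketch does not do.
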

In order to prove Theorem \ref{main thm}, we use the arithmetic theory of quadratic form.

\vskip 0.3em

In \cite{KL}, Kane and Liu showed that the asymptotically increasing $\gamma_m$ ($\gamma_m$ is the smallest positive integer for which an $m$-gonal form is universal if the $m$-gonal form represents every positive integer up to $\gamma_m$) is bounded by $C_{\epsilon}\cdot m^{7+\epsilon}$ for any $\epsilon >0$ and an absolute constant $C_{\epsilon}>0$ which is dependent on $\epsilon$ by observing the sufficiently large integers which are represented by the $m$-gonal forms of escalator tree of $m$-gonal forms of fixed depth $6$.
Theorem \ref{main thm} may directly improve the upper bound for $\gamma_m$ as $C\cdot (m-2)^3$ for some absolute constant $C$ since every $m$-gonal form of escalator tree of $m$-gonal forms of depth $6$ is locally universal.
Although, the exact growth of $\gamma_m$ on $m$ was already revealed as linear on $m$ by the author and Kim \cite{KP'}.

\section{preliminaries}

Before we move on, we set our languages and notations which are used throughout this article.
Basically, our languages and notations follow conventional that in the arithmetic theory of quadratic form.
A {\it quadratic form over $\z$ (or $\z_p$)} is a $\z$ (or $\z_p$)-module $L$ equipped with a quadratic form $Q:L\rightarrow \z$ (or $\z_p$) and the  corresponding symmetric bilinear map $B(x,y):=\frac{1}{2}(Q(x+y)-Q(x)-Q(y))$.
For a quadratic form $L=\z v_1+\cdots+\z v_n$ of $\rank n$, we usually identify $L$ with its {\it Gram-matrix $\begin{pmatrix}B(v_i,v_j)\end{pmatrix}_{n\times n}$}.
When $L$ is a binary (i.e., $n=2$) quadratic form over $\z$, we adopt the notation $[Q(v_1),B(v_1,v_2),Q(v_2)]$ instead of $\begin{pmatrix}B(v_i,v_j)\end{pmatrix}_{2 \times 2}$.
We write an $m$-gonal form 
\begin{equation} \label{m form'}
a_1P_m(x_1)+\cdots+a_nP_m(x_n)
\end{equation}
 simply as $\left<a_1,\cdots,a_n\right>_m$.
When $m=4$, we conventionally denote the square form (i.e., diagonal quadratic form) of (\ref{m form'}) by $\left<a_1,\cdots, a_n\right>$ instead of $\left<a_1,\cdots,a_n\right>_4$.

\vskip 0.3em

Note that an integer $A(m-2)+B$ is represented by an $m$-gonal form 
$$\sum_{i=1}^na_iP_m(x_i)=\frac{m-2}{2}\{(a_1x_1^2+\cdots+a_nx_n^2)-(a_1x_1+\cdots+a_nx_n)\}+(a_1x_1+\cdots+a_nx_n)$$
over $\z$ (or $\z_p$)
if and only if the diophantine system
\begin{equation}\label{eq1}
\begin{cases}a_1x_1^2+\cdots+a_nx_n^2=2A+B+k(m-4) \\ a_1x_1+\cdots+a_nx_n=B+k(m-2)  \end{cases}\end{equation}
is solvable for some $k \in \z$ (or $\z_p$) over $\z$ (or $\z_p$).
The system (\ref{eq1}) is solvable over $\z$ (or $\z_p$) if and only if
\begin{equation} \label{eq2}
    \left(B+k(m-2)-\left(\sum \limits_{i=2}^na_ix_i\right)\right)^2+\sum\limits_{i=2}^na_1a_ix_i^2=2Aa_1+Ba_1+k(m-4)a_1
\end{equation}
is solvable over $\z$ (or $\z_p$).
The left hand side of the equation (\ref{eq2}) may be written as
$$Q_{a_1 ; \mathbf a}((x_2,\cdots,x_n)-(B+k(m-2))(r_2,\cdots,r_n)) + (B+k(m-2))^2 \cdot \left(1-\sum \limits _{i=2}^na_ir_i\right) $$
where $Q_{a_1 ; \mathbf a}(x_2,\cdots,x_n):=\sum_{i=2}^n(a_1a_i+a_i^2)x_i^2+\sum_{2\le i<j \le n}2a_ia_jx_ix_j$ is a quadratic form and $r_1,\cdots,r_n \in \q$ are the solution for
$$\begin{cases}
(a_1a_2+a_2^2)r_2+a_2a_3r_3+\cdots+a_2a_nr_n=a_2\\
a_2a_3r_2+(a_1a_3+a_3^2)r_3 +\cdots + a_3a_nr_n=a_3 \\
\quad \quad \quad \quad \quad \quad \quad \quad \quad \vdots \\
a_2a_nr_2+a_3a_nr_3+\cdots +(a_1a_n+a_n^2)r_n=a_n.
\end{cases}$$
Note that since 
\begin{equation}\label{n-1 sub}
Q_{a_1 ; \mathbf a}= \begin{pmatrix}
a_1a_2+a_2^2 & a_2a_3 &\cdots & a_2a_n\\
a_2a_3 & a_1a_3+a_3^2 & \cdots & a_3a_n\\
\vdots & \vdots & \ddots  & \vdots \\
a_2a_n & a_3a_n & \cdots &a_1a_n+a_n^2
\end{pmatrix}
\end{equation}
is a sub-quadratic form of rank $n-1$ of the positive definite diagonal quadratic form 
$\sum_{i=1}^na_1a_ix_i^2$ satisfying $a_1x_1+\cdots+a_nx_n=0$,
we have that the determinant of $Q_{a_1 ; \mathbf a}$ is positive.
We denote the non-zero determinant of $Q_{a_1 ; \mathbf a}$ as $d_{a_1;\mathbf a}$.

By Theorem 4.9 in \cite{CO}, since every quadratic polynomial of rank greater than or equal to $5$ is almost regular, when $n \ge 6$, we may get an absolute constant 
\begin{equation} \label{C}
    C(a_1,\cdots,a_n)
\end{equation}
which is dependent on $a_1,\cdots,a_n, r_1,\cdots,r_n$ (so it is essentially dependent only on $a_1,\cdots,a_n$) for which
\begin{equation}\label{qa}
Q_{a_1 ; \mathbf a}((x_2,\cdots,x_n)-(B+k(m-2))(r_2,\cdots,r_n))=c
\end{equation}
has an integer solution $(x_2,\cdots,x_n ) \in \z^{n-1}$
if the equation (\ref{qa}) is locally solvable and $c\ge C(a_1,\cdots,a_n)$.

And then we may observe that an integer $A(m-2)+B$ with $0\le B \le m-3$ satisfying that
$$\text{the system (\ref{eq1}) with $k \in \z$ is locally solvable}$$ 
is represented by the $m$-gonal form $\left<a_1,\cdots, a_n\right>_m$ if 
$$A \ge \frac{1}{2a_1} \left(C(a_1,\cdots,a_n)+(B+k(m-2))^2 \cdot \left(1-\sum _{i=2}^na_ir_i\right)\right)-B-k(m-2) .$$

\vskip 1em

Throughout this article, we show that for any integer $A(m-2)+B$ with $0\le B \le m-3$ which is locally represented by $\left<a_1,\cdots, a_n\right>_m$ with $n \ge 6$, we may choose an integer $k$ in a bounded scope $[0,K(a_1,\cdots,a_n)-1]$ for which
the system (\ref{eq1}) is locally solvable where $K(a_1,\cdots,a_n)$ is an absolute constant which is dependent only on $a_1,\cdots,a_n$.
Which may induce the first argument in Theorem \ref{main thm}.

\vskip 0.3em

In this article, to avoid the appearance of superfluous duplications, we always assume that an $m$-gonal form $\left<a_1,\cdots,a_n\right>_m$ is primitive, i.e., $\gcd(a_1,\cdots,a_n)=1$.
Any unexplained terminology and notation can be found in \cite{O}.

\section{Local representation}

\begin{prop} \label{loc.rep}
Let $F_m(\mathbf x)=a_1P_m(x_1)+\cdots+a_nP_m(x_n)$ be a primitive $m$-gonal form.
\begin{itemize}
    \item [(1) ] When $p$ is an odd prime with $p|m-2$, $F_m(\mathbf x)$ is universal over $\z_p$.
    \item [(2) ] When $m \not\eq 0 \pmod 4$, $F_m(\mathbf x)$ is universal over $\z_2$.
   \item [(3) ] When $p$ is an odd prime with $(p,m-2)=1$, an integer $N$ is represented by $F_m(\mathbf x)$ over $\z_p$ if and only if the integer $8(m-2)N+(a_1+\cdots+a_n)(m-4)^2$ is represented by the diagonal quadratic form $\left<a_1,\cdots,a_n \right>$ over $\z_p$.
    \item [(4) ] When $m \eq 0 \pmod 4$, an integer $N$ is represented by $F_m(\mathbf x)$ over $\z_2$ if and only if the integer $2(m-2)N+(a_1+\cdots+a_n)\left(\frac{m-4}{2}\right)^2$ is represented by the diagonal quadratic form $\left<a_1,\cdots,a_n \right>$ over $\z_2$.
\end{itemize}
\end{prop}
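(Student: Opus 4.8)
My plan is to make the whole proposition rest on a single completing-the-square identity and then analyze it prime by prime. Writing $P_m(x)=\frac{m-2}{2}x^2-\frac{m-4}{2}x$, a direct computation gives, for every $\mathbf x\in\z_p^n$,
\begin{equation*}
8(m-2)F_m(\mathbf x)+(a_1+\cdots+a_n)(m-4)^2=\sum_{i=1}^na_i\bigl(2(m-2)x_i-(m-4)\bigr)^2 .
\end{equation*}
Hence $F_m(\mathbf x)=N$ is solvable over $\z_p$ if and only if the diagonal form $\qf{a_1,\dots,a_n}$ represents $8(m-2)N+(a_1+\cdots+a_n)(m-4)^2$ by a vector whose coordinates lie in the fixed progression $y_i\eq-(m-4)\pmod{2(m-2)}$. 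The whole argument then comes down to deciding how much this residue restriction costs: where $2(m-2)$ is a $p$-adic unit it costs nothing, and where it is not one must track $p$-adic valuations.

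Parts (1) and (3) are the clean cases. If $p$ is odd and $(p,m-2)=1$, then $2(m-2)\in\z_p^\times$, so $x_i\mapsto 2(m-2)x_i-(m-4)$ is a bijection of $\z_p$, the restriction is vacuous, and the identity yields part (3) at once. If $p$ is odd with $p\mid m-2$, the quadratic part of $P_m$ degenerates: since $m-4\eq-2\pmod p$ one finds $P_m(x)\eq x\pmod p$ and $P_m'(x)\eq1\pmod p$. By primitivity some $a_j$ is a unit in $\z_p$; fixing the remaining variables at $0$, the single-variable map $x_j\mapsto a_jP_m(x_j)$ has unit derivative everywhere, so Hensel's lemma makes it onto $\z_p$ and $F_m$ is universal over $\z_p$, which is part (1).

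Parts (2) and (4) are the $2$-adic cases and carry the real work. Here I would track $v_2$ of the substituted coordinates: when $m$ is odd or $m\eq2\pmod4$ the admissible coordinates are confined to a fixed \emph{odd} $2$-adic residue class, whereas when $m\eq0\pmod4$ they are forced to be \emph{even}. For the universal case (2), I would combine this with the Local Square Theorem (an odd unit is a square exactly when it is $\eq1\pmod8$) and Hensel's lemma: the target runs through a full residue coset as $N$ varies, and because $\gcd(a_1,\dots,a_n)=1$ the odd-coordinate values of $\qf{a_1,\dots,a_n}$ sweep out that entire coset, giving universality. For part (4), the forced even-ness means each $z_i=(m-2)x_i-\frac{m-4}{2}$ is twice a $2$-adic integer, so under the alternative normalization $2(m-2)F_m(\mathbf x)+(a_1+\cdots+a_n)(\frac{m-4}{2})^2=\sum a_iz_i^2$ the representation of $N$ turns into a representation of the ($4$-divisible) target by $\qf{a_1,\dots,a_n}$ through even vectors, which one then matches with the stated criterion.

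I expect the main obstacle to be precisely this last matching in the $p=2$, $m\eq0\pmod4$ case. Since the substitution pins every coordinate into the even class, the honest equivalent condition is representability of the target by $\qf{a_1,\dots,a_n}$ \emph{using only even vectors}, and converting this into an unrestricted representation statement is delicate: it is \emph{not} a general fact that a diagonal $\z_2$-form representing a $4$-divisible value represents it by an even vector (for example $\qf{1,4}$ represents $8$ but not with both coordinates even). The proof must therefore exploit the specific $2$-adic shape of the target $2(m-2)N+(a_1+\cdots+a_n)(\frac{m-4}{2})^2$, using its valuation together with the Local Square Theorem to control the parities of the coordinates in any representation. This $2$-adic bookkeeping is the crux, while the odd primes and the case $m\nequiv0\pmod4$ are comparatively routine.
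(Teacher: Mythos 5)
Your treatment of (1) and (3) coincides with the paper's: part (3) is exactly the completing-the-square identity with $2(m-2)\in\z_p^{\times}$, and part (1) is the one-variable Hensel argument after using primitivity to find a unit coefficient. For part (2) the paper is more direct than your sketch: it again reduces to a single variable $a_jP_m(x_j)=N$ with $a_j\in\z_2^{\times}$ and simply exhibits the root $\frac{m-4-\sqrt{(m-4)^2-8N(m-2)}}{2(m-2)}\in\z_2$ of the resulting quadratic. Your version, which works with the full $n$-variable identity and argues that ``odd-coordinate values sweep out the coset,'' is not quite right as stated when $m\eq 2\pmod 4$: there the admissible coordinates $2(m-2)x_i-(m-4)$ are confined to a single class modulo $2(m-2)$, with $\ord_2(2(m-2))\ge 3$, not merely to the odd residues. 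Reducing to one variable as you already do in (1) repairs this, so I would regard (2) as completable but not complete.

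The real issue is part (4), and here you have put your finger on something more serious than a step you could not finish. The substitution $y_i=(m-2)x_i-\frac{m-4}{2}$ is a bijection of $\z_2$ onto $2\z_2$, so the honest equivalence is that $N$ is represented by $F_m$ over $\z_2$ if and only if the target $2(m-2)N+(a_1+\cdots+a_n)\left(\frac{m-4}{2}\right)^2$ is represented by $\qf{a_1,\dots,a_n}$ \emph{by a vector in $(2\z_2)^n$}, equivalently if and only if the target divided by $4$, namely $\frac{m-2}{2}N+(a_1+\cdots+a_n)\left(\frac{m-4}{4}\right)^2$, is represented by $\qf{a_1,\dots,a_n}$ over $\z_2$. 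Your own example $\qf{1,4}$ representing $8$ only by odd-coordinate vectors is not just an obstacle to the proof: taking $m=4$ and $N=2$ it is a counterexample to the stated ``if'' direction, since $P_4(x)=x^2$, the form $x_1^2+4x_2^2$ does not represent $2$ over $\z_2$, yet the target $2(m-2)N+(a_1+a_2)\cdot 0=8$ is represented by $\qf{1,4}$ (and padding with coefficients $16,64,\dots$ gives primitive counterexamples of any rank). So the ``2-adic bookkeeping'' you defer cannot succeed for the statement as written; the equivalence must be reformulated as above. For what it is worth, the paper's own proof of (4) has exactly the same gap: it records that $m-2\in 2\z_2^{\times}$ and $\frac{m-4}{2}\in 2\z_2$, which establishes only the ``only if'' direction together with the restricted (even-vector) converse, and then stops.
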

\begin{proof}
(1) One may easily show that for any $N \in \z_p$, $P_m(x)=N$ is solovable over $\z_p$ by using Hensel's Lemma (13:9 in \cite{O}).
Which yields the claim from the assumption that $F_m(\mathbf x)$ is primitive.\\
(2) For any $N \in \z_2$, the equation $P_m(x)=N$ has a $2$-adic integer solution $\frac{m-4-\sqrt{(m-4)^2-8N(m-2)}}{2(m-2)} \in \z_2$.
Which yields the claim from the assumption that $F_m(\mathbf x)$ is primitive. \\
(3) Note that $$N=\sum_{i=1}^na_iP_m(x_i)$$
if and only if
$$8(m-2)N+(a_1+\cdots+a_n)(m-4)^2=\sum_{i=1}^na_i(2(m-2)x_i-(m-4))^2$$
where $2(m-2) \in \z_p^{\times}$.
It completes the proof.\\
(4) Note that $$N=\sum_{i=1}^na_iP_m(x_i)$$
if and only if
$$2(m-2)N+(a_1+\cdots+a_n)\left(\frac{m-4}{2}\right)^2=\sum_{i=1}^na_i\left((m-2)x_i-\frac{m-4}{2}\right)^2$$
where $m-2 \in 2\z_2^{\times}$ and $\frac{m-4}{2} \in 2\z_2$.
It completes the proof.
\end{proof}

\vskip 0.8em

\begin{prop} \label{odd,5}
For an odd prime $p$, if there are more than four units of $\z_p$ in $\{a_1,\cdots,a_n\}$ with $n \ge 6$ by admitting a recursion, then
$$\begin{cases}a_1x_1^2+\cdots+a_nx_n^2=2A+B+k(m-4) \\ a_1x_1+\cdots+a_nx_n=B+k(m-2)  \end{cases}$$
is solvable over $\z_p$
for any $A,B,k \in \z$.
\end{prop}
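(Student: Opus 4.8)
The plan is to collapse the two-equation system into a single one-variable quadratic whose solvability over $\z_p$ is controlled by a discriminant, and then to use the leftover variables as free parameters that force that discriminant to be a nonzero square. First I would reindex so that $a_1,\dots,a_5 \in \z_p^{\times}$ (possible since at least five coefficients are units) and set $x_6=\cdots=x_n=0$, reducing the claim to the solvability over $\z_p$ of
\[
\sum_{i=1}^{5} a_i x_i^2 = C_1, \qquad \sum_{i=1}^{5} a_i x_i = C_2,
\]
where $C_1 = 2A+B+k(m-4)$ and $C_2 = B+k(m-2)$. Among the five units I may always pick a pair with $a_i+a_j \in \z_p^{\times}$, since some pair among $a_1,a_2,a_3$ has unit sum: $a_1+a_2\equiv a_1+a_3\equiv 0$ would force $a_2+a_3\equiv -2a_1\not\equiv 0 \pmod p$ as $p$ is odd. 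So after reordering assume $a_1+a_2 \in \z_p^{\times}$.

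Next I would treat $x_3,x_4,x_5$ as parameters and eliminate $x_2$ through the linear equation, $x_2 = a_2^{-1}(D_2-a_1x_1)$, writing $D_1 = C_1-\sum_{i=3}^{5}a_ix_i^2$ and $D_2 = C_2-\sum_{i=3}^{5}a_ix_i$. Substituting into the quadratic equation yields the single quadratic
\[
a_1(a_1+a_2)x_1^2 - 2a_1 D_2\,x_1 + (D_2^2 - a_2 D_1) = 0,
\]
whose leading coefficient is a unit and whose discriminant is $\Delta = 4a_1a_2\big((a_1+a_2)D_1 - D_2^2\big)$. Hence it suffices to choose $x_3,x_4,x_5\in\z_p$ so that $\Delta$ is a nonzero square in $\z_p$; then $\sqrt{\Delta}\in\z_p$ by Hensel's Lemma and $x_1,x_2\in\z_p$ follow at once. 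Equivalently, since $4a_1a_2$ is a fixed unit, I must arrange that $(a_1+a_2)D_1-D_2^2$ lands in a prescribed square coset modulo $p$.

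The heart of the matter is that $(a_1+a_2)D_1-D_2^2$, read as a polynomial in $x_3,x_4,x_5$, has quadratic part $-\big((a_1+a_2)\sum_{i=3}^5 a_ix_i^2 + (\sum_{i=3}^5 a_ix_i)^2\big)$, whose Gram matrix $(a_1+a_2)\,\mathrm{diag}(a_3,a_4,a_5) + \mathbf{a}\mathbf{a}^{\mathsf T}$, with $\mathbf a=(a_3,a_4,a_5)^{\mathsf T}$, has determinant $(a_1+a_2)^2 a_3a_4a_5\,(a_1+\cdots+a_5)$. Thus this quadratic part has rank $3$ when $a_1+\cdots+a_5\not\equiv 0$ and rank exactly $2$ otherwise, so it is always of rank $\ge 2$. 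Over the residue field every quadratic polynomial whose quadratic part has rank $\ge 2$ is surjective, because a binary form over $\z/p\z$ is universal (a norm form of the quadratic extension when anisotropic, hyperbolic otherwise); hence there exist $x_3,x_4,x_5 \bmod p$ making $(a_1+a_2)D_1-D_2^2$ equal to any desired residue, in particular a nonzero element of the coset $(a_1a_2)^{-1}\big((\z/p\z)^{\times}\big)^2$. Lifting these parameters arbitrarily to $\z_p$ makes $\Delta$ a unit square, which completes the argument.

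I expect the only delicate point to be this rank computation, together with the observation that the rank drop to $2$ in the case $a_1+\cdots+a_5\equiv 0 \pmod p$ is harmless precisely because binary quadratic forms over a finite field remain universal; everything else is elimination and a single application of Hensel's Lemma.
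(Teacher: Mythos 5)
Your proof is correct, but it takes a genuinely different route from the paper. The paper works lattice-theoretically: it observes that the system asks for a representation of the binary $\z_p$-lattice $[a_1+\cdots+a_n,\,B+k(m-2),\,2A+B+k(m-4)]$ by $\left<a_1,\cdots,a_n\right>\otimes\z_p$ sending the first basis vector to $(1,\ldots,1)$; when $a_1+\cdots+a_n$ is a unit this vector splits off a unimodular unary sublattice whose orthogonal complement still carries a unimodular Jordan component of rank $\ge 4$, hence is universal, and O'Meara's 82:15 and 91:9 finish the job; when $a_1+\cdots+a_n\equiv 0\pmod p$ it discards one unit coefficient (setting that variable to $0$) and repeats the argument with ternary unimodular universality. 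You instead eliminate down to a one-variable quadratic and control its discriminant $4a_1a_2\bigl((a_1+a_2)D_1-D_2^2\bigr)$ by a counting argument over $\mathbb{F}_p$ (surjectivity of a quadratic polynomial whose quadratic part has rank $\ge 2$, which your matrix-determinant-lemma computation guarantees even when $a_1+\cdots+a_5\equiv 0$) followed by one application of Hensel's Lemma. All the individual steps check out: the existence of a pair with $a_i+a_j\in\z_p^\times$ among three units, the discriminant computation, the determinant $(a_1+a_2)^2a_3a_4a_5(a_1+\cdots+a_5)$, and the rank-$\ge 2$ lower bound for a rank-one perturbation of an invertible matrix. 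What the paper's approach buys is brevity and uniformity with the later local propositions (it reuses the same representation-of-binary-sublattices template throughout Section 3); what yours buys is a self-contained, elementary argument that avoids the machinery of Jordan splittings and universality of unimodular lattices, and handles the degenerate case $a_1+\cdots+a_5\equiv 0\pmod p$ without a separate case analysis.
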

\begin{proof}
First assume that $a_1+\cdots+a_n \not\eq 0\pmod p$.
Then by using 82:15 and 91:9 in \cite{O}, we may obtain that for any $A,B,k$,
the binary quadratic $\z_p$-lattice 
$[a_1+\cdots+a_n, B+k(m-2), 2A+B+k(m-4)] \otimes \z_p$
 is represented as 
$$\begin{pmatrix}1 & 1& \cdots & 1 \\ x_1 & x_2 & \cdots & x_n \end{pmatrix} 
\begin{pmatrix} a_1 & 0 & \cdots & 0 \\ 0 & a_2 &\cdots & 0 \\\vdots & \vdots & \ddots & \vdots \\ 0 & 0 & \cdots & a_n \end{pmatrix}
\begin{pmatrix} 1&x_1\\ 1 & x_2 \\ \vdots & \vdots \\ 1 & x_n  \end{pmatrix}
$$
for some $(x_1,\cdots,x_n) \in \z_p^n$ since every quaternary unimodular $\z_p$-lattice is universal.
Which yields the claim.

Now assume that $a_1+\cdots+a_n \eq 0\pmod p$.
Without loss of generality, let $0=\ord_p(a_1)$.
Then through same arguments with the above, one may induce that
$$\begin{cases}a_2x_2^2+\cdots+a_nx_n^2=2A+B+k(m-4) \\ a_2x_2+\cdots+a_nx_n=B+k(m-2)  \end{cases}$$
is solvable over $\z_p$
for any $A,B,k$ since $a_2+\cdots+a_n \not\eq 0 \pmod p$ and every ternary unimodular $\z_p$-lattice is universal.
This completes the proof.
\end{proof}

\vskip 0.8em

\begin{prop} \label{odd,nd12}
Let $p$ be an odd prime with $(p,m-2)=1$.
If an integer $A(m-2)+B$ is represented by $\left<a_1,\cdots,a_n\right>_m$ of $\rank n \ge 6$ over $\z_p$,
then there is a residue $k_{m;\mathbf a, A,B}(p)$ in $\z/p^{2\ord_p(d_{a_1;\mathbf a})}\z$ for which
$$\begin{cases}a_1x_1^2+\cdots+a_nx_n^2=2A+B+k(m-4) \\ a_1x_1+\cdots+a_nx_n=B+k(m-2)  \end{cases}$$
is solvable over $\z_p$ for any $k \eq k_{m;\mathbf a,A,B}(p) \pmod { p^{2\ord_p(d_{a_1;\mathbf a})}}$.
\end{prop}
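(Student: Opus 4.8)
Throughout write $e:=\ord_p(d_{a_1;\mathbf a})$. Since the system (\ref{eq1}) is symmetric in the indices and, as one sees from $d_{a_i;\mathbf a}=a_i^{\,n-3}(a_1\cdots a_n)(a_1+\cdots+a_n)$, the order $\ord_p(d_{a_i;\mathbf a})$ is smallest when $a_i$ is a $p$-adic unit, I may relabel so that $a_1\in\z_p^\times$; since a finer modulus forces the coarser one, proving the claim for the (smaller) modulus attached to this labeling implies the stated one. The plan is to establish the one-sided stability statement: \emph{if (\ref{eq1}) is solvable over $\z_p$ for some $k_*$, then it is solvable for every $k\equiv k_*\pmod{p^{2e}}$.} Granting this, the hypothesis that $A(m-2)+B$ is represented over $\z_p$ produces a $k_*\in\z_p$ for which (\ref{eq1}) is solvable, and $k_{m;\mathbf a,A,B}(p):=k_*\bmod p^{2e}$ is the asserted residue. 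When $e=0$ the lattice $Q_{a_1;\mathbf a}$ is unimodular of rank $n-1\ge 5\ge 3$, hence universal over $\z_p$, and every $k$ works; so I assume $e\ge 1$.

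By the reduction recorded before (\ref{qa}), solvability of (\ref{eq1}) for a given $k$ is equivalent to the existence of $\mathbf x'=(x_2,\dots,x_n)\in\z_p^{\,n-1}$ with $Q_{a_1;\mathbf a}(\mathbf x'-s\mathbf r)=c_k$, where $s:=B+k(m-2)$, $\mathbf r=(r_2,\dots,r_n)$, and $c_k$ is the right-hand side of (\ref{qa}); equivalently, $c_k$ is represented by $Q_{a_1;\mathbf a}$ by a vector of the fixed coset $\mathcal C_s:=-s\mathbf r+\z_p^{\,n-1}$. The entries of $\mathbf r$ have denominators dividing $d_{a_1;\mathbf a}$, so $\mathbf r\in p^{-e}\z_p^{\,n-1}$, and hence $\mathcal C_s$ depends only on $s\bmod p^{e}$: if $k\equiv k_*\pmod{p^{e}}$ then $(s-s_*)\mathbf r\in\z_p^{\,n-1}$ and $\mathcal C_s=\mathcal C_{s_*}$. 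On the value side, a direct computation gives $1-\sum_{i=2}^na_ir_i=a_1/\sigma$ with $\sigma:=a_1+\cdots+a_n$, whence $c_k=\tfrac{a_1}{\sigma}\bigl(\sigma(2A+B+k(m-4))-s^2\bigr)$; using $\ord_p(\sigma)\le e$ one checks $c_k-c_{k_*}\in\tfrac{a_1}{\sigma}p^{2e}\z_p\subseteq p^{e}\z_p$ whenever $k\equiv k_*\pmod{p^{2e}}$. Thus, for $k\equiv k_*\pmod{p^{2e}}$, both the coset $\mathcal C_s$ and the relevant residue of the target are frozen, and it remains to upgrade a solution $\mathbf z_*\in\mathcal C_{s_*}$ of $Q_{a_1;\mathbf a}(\mathbf z_*)=c_{k_*}$ to a solution of $Q_{a_1;\mathbf a}(\mathbf z)=c_k$ inside the same coset.

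The engine is a Hensel perturbation. Writing $\mathbf z=\mathbf z_*+\boldsymbol\eta$ with $\boldsymbol\eta\in\z_p^{\,n-1}$ keeps us in $\mathcal C_{s_*}$ and, identifying $Q_{a_1;\mathbf a}$ with its Gram matrix, turns the requirement into $2\,\boldsymbol\eta^{\mathsf T}(Q_{a_1;\mathbf a}\mathbf z_*)+Q_{a_1;\mathbf a}(\boldsymbol\eta)=c_k-c_{k_*}$. The key point is that the gradient $Q_{a_1;\mathbf a}\mathbf z_*$ is integral even though $\mathbf z_*$ is not: because $Q_{a_1;\mathbf a}\mathbf r=(a_2,\dots,a_n)$ the $p^{-e}$ denominators cancel, and one computes its $i$-th entry to be $a_1a_i(x_i-x_1)$, where $x_1$ is the value forced by the linear equation. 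Consequently, whenever some $a_1a_i(x_i-x_1)$ is a $p$-adic unit, the one-variable specialization $\boldsymbol\eta=\eta_i\mathbf e_i$ yields $(a_1a_i+a_i^2)\eta_i^2+2a_1a_i(x_i-x_1)\eta_i-(c_k-c_{k_*})=0$, a quadratic whose linear coefficient is a unit and whose constant term lies in $p\z_p$; Hensel's Lemma (13:9 in \cite{O}) then delivers $\eta_i\in\z_p$. This settles the ``nondegenerate'' case comfortably within the precision $p^{2e}$.

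The main obstacle is the degenerate case, in which $a_1a_i(x_i-x_1)\in p\z_p$ for all $i$ and no admissible solution makes any of these a unit. With $a_1$ a unit this forces, for each $i$, either $p\mid a_i$ or $x_i\equiv x_1\pmod p$; when a second unit among the $a_i$ is present one can exploit the freedom in the $n-1\ge 5$ variables to move the solution into nondegenerate position, but when $a_1$ is the \emph{only} $p$-adic unit (which primitivity permits) every troublesome coordinate has $p\mid a_i$, the linear term can no longer be made a unit, and one is forced to descend through the Jordan structure of $Q_{a_1;\mathbf a}$. There the $p$-divisible coordinates contribute a common factor $p$ to $Q_{a_1;\mathbf a}$, the coset shift $\mathbf r$ becomes integral on that block, and the problem reduces to the same question for a lattice of strictly smaller determinant order, the residual unimodular part having rank $\ge 3$ (using $n\ge 6$) and hence being universal over $\z_p$, which makes the descent terminate. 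The delicate bookkeeping I expect to be the technical heart is that the exponent $2e$ must be apportioned once and for all as $p^{e}$ for coset stability and $p^{e}$ for value stability, and that this allocation is preserved at every stage of the descent, so that the single modulus $p^{2e}$ suffices uniformly.
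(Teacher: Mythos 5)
Your setup is sound and agrees with the paper's reduction: passing to the one-sided stability statement, the identity $1-\sum_{i\ge2}a_ir_i=a_1/\sigma$, the integrality of the gradient $Q_{a_1;\mathbf a}\mathbf z_*=\bigl(a_1a_i(x_i-x_1)\bigr)_i$, and the estimate $c_k-c_{k_*}\in\frac{a_1}{\sigma}\,p^{2e}\z_p\subseteq p^{e}\z_p$ are all correct, and your Hensel perturbation genuinely disposes of the case where some $a_1a_i(x_i-x_1)$ is a $p$-adic unit. But the proposal stops exactly where the proof has to do its work. The degenerate case is not an argument but a description of an intended induction: you assert, without proof, that the coset shift ``becomes integral on that block,'' that the reduced problem is an instance of the same statement for a lattice of smaller determinant order, that the descent terminates, and --- the crux --- that the single modulus $p^{2e}$ survives every stage; you yourself label this ``the technical heart I expect.'' Even the intermediate sub-case (two units among the $a_i$ but all gradient entries in $p\z_p$) rests on the unproved claim that one can ``move the solution into nondegenerate position,'' which is not automatic when $c_{k_*}$ is highly divisible by $p$ (all solutions may then be forced deep into the lattice). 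As written, the proof is complete only when a unit gradient entry happens to exist.

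For what it is worth, the paper's proof avoids any descent. Since $p$ is odd it Jordan-diagonalizes $Q_{a_1;\mathbf a}$ once, rewriting the problem as representing $c_k$ by $\sum_{i=2}^n a_i'\bigl(y_i-(B+k(m-2))r_i'\bigr)^2$, and then treats three terminal cases: (i) if some shifted center $(B+k(m-2))r_i'$ has negative order, the values $a_i'\bigl(\z_p-(B+k(m-2))r_i'\bigr)^2$ form a full arithmetic progression whose common difference has order less than $\ord_p(d_{a_1;\mathbf a})$, which absorbs the perturbation $c_k-c_{k_*}$; (ii) if all shifted centers are integral and $\ord_p(c_{k_*})<\ord_p(d_{a_1;\mathbf a})$, then $c_k/c_{k_*}$ is a unit square for $k\equiv k_*\pmod{p^{2e}}$ and the solution transfers directly; (iii) otherwise $c_k\in p^{\ord_p(d_{a_1;\mathbf a})}\z_p$ and one invokes the $p^{\ord_p(d_{a_1;\mathbf a})}\z_p$-universality of the diagonal lattice $\left<a_2',\cdots,a_n'\right>\otimes\z_p$ of rank $n-1\ge5$. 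You would need either to carry out your descent with the modulus bookkeeping in full, or to replace the degenerate case by a direct case split of this kind, before the proposal constitutes a proof.
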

\begin{proof}
It would be enough to show that if
$$\begin{cases}a_1x_1^2+\cdots+a_nx_n^2=2A+B+k(m-4) \\ a_1x_1+\cdots+a_nx_n=B+k(m-2)  \end{cases}$$
is solvable over $\z_p$, then
$$\begin{cases}a_1x_1^2+\cdots+a_nx_n^2=2A+B+k'(m-4) \\ a_1x_1+\cdots+a_nx_n=B+k'(m-2)  \end{cases}$$
is also solvable over $\z_p$ for any $k' \in k+p^{2\ord_p(d_{a_1;\mathbf a})}\z_p$.
In other ways, we show that if the equation
$$Q_{a_1 ; \mathbf a}(\mathbf x-(B+k(m-2))\mathbf r)=a_1(2A+B+k(m-4))-(B+k(m-2))^2 \cdot \left(1-\sum _{i=2}^na_ir_i\right)$$
is solvable over $\z_p$, then the equation
$$Q_{a_1 ; \mathbf a}(\mathbf x-(B+k'(m-2))\mathbf r)=a_1(2A+B+k'(m-4))-(B+k'(m-2))^2 \cdot \left(1-\sum _{i=2}^na_ir_i\right)$$
is also solvable over $\z_p$ for any $k' \in k+p^{2\ord_p(d_{a_1;\mathbf a})}\z_p$.
In virtue of the {\it Jordan decomposition} (see 91 in \cite{O}), we may take $T \in GL_{n-1}(\z_p)$ for which
$$T^t\cdot  \begin{pmatrix}
a_1a_2+a_2^2 & a_2a_3 &\cdots & a_2a_n\\
a_2a_3 & a_1a_3+a_3^2 & \cdots & a_3a_n\\
\vdots & \vdots & \ddots  & \vdots \\
a_2a_n & a_3a_n & \cdots &a_1a_n+a_n^2
\end{pmatrix} \cdot T=\begin{pmatrix}
a_2' & 0 &\cdots & 0\\
0 & a_3' &\cdots & 0\\
\vdots & \vdots & \ddots  & \vdots \\
0 & 0 & \cdots &a_n'
\end{pmatrix}.$$
Which allows us to consider the simple diagonal form
$$\sum_{i=2}^na_i'(x_i-(B+k(m-2))r_i')^2$$
instead of $Q_{a_1 ; \mathbf a}(\mathbf x-(B+k(m-2))\mathbf r)$ where $\mathbf r'=T\cdot \mathbf r$.
Note that
$$\left(\z_p-(B+k'(m-2))r_i'\right)^2 =((B+k(m-2))r_i')\cdot \z_p+((B+k(m-2))r_i')^2 $$
for any $k' \in k+p^{2\ord_p(d_{a_1;\mathbf a})}\z_p$
if $\ord_p((B+k(m-2))r_i')<0$.
So if there is an $i$ such that $\ord_p((B+k(m-2))r_i')<0$, then 
the statement would hold
because 
$$\begin{cases}\ord_p(r_i')\ge -\ord_p(d_{a_1;\mathbf a}) \\ \ord_p(a_i') \le \ord_p(d_{a_1;\mathbf a}) \end{cases}$$
for all $i$.

Now assume that $\ord_p((B+k(m-2))r_i')\ge 0$ (i.e., $(B+k(m-2))r_i' \in \z_p$) for all $i$.
If $$a_1(2A+B+k(m-4))+(B+k(m-2))^2 \cdot \left(1-\sum _{i=2}^na_ir_i\right) <\ord_p(d_{a_1;\mathbf a}),$$ then 
$a_1(2A+B+k'(m-4))+(B+k'(m-2))^2 \cdot (1-\sum _{i=2}^na_ir_i)$ is equivalent with 
$a_1(2A+B+k(m-4))+(B+k(m-2))^2 \cdot (1-\sum _{i=2}^na_ir_i)$ 
up to $\z_p$-unit square for any $k' \in k+p^{2\ord_p(d_{a_1;\mathbf a})}\z_p$.
So we obtain the claim.
If $$a_1(2A+B+k(m-4))+(B+k(m-2))^2 \cdot \left(1-\sum _{i=2}^na_ir_i\right) \ge \ord_p(d_{a_1; \mathbf a}),$$ then 
$a_1(2A+B+k'(m-4))+(B+k'(m-2))^2 \cdot (1-\sum _{i=2}^na_ir_i) \in p^{\ord_p(d_{a_1; \mathbf a})}\z_p$ for any $k' \in k+p^{2\ord_p(d_{a_1;\mathbf a})}\z_p$.
On the other hand, the diagonal quadratic $\z_p$-lattice $$\left<a_2',\cdots, a_n'\right>\otimes \z_p$$ of $\rank n-1 \ge 5$ is $p^{\ord_p(d_{a_1;\mathbf a})}\z_p$-universal since $\max\{\ord_p(a_2'),\cdots,\ord_p(a_n') \} \le \ord_p(d_{a_1;\mathbf a})$.
This completes the proof.
\end{proof}

\vskip 0.8em

\begin{prop} \label{odd,d}
Let $p$ be an odd prime with $p|m-2$.
Then for any integer $A(m-2)+B$, there  is a residue $k_{m;\mathbf a, A,B}(p)$ in $\z/p^{\max \limits_{1\le i \le n} \{\ord_p(a_i)\}} \z$ for which 
$$\begin{cases}a_1x_1^2+\cdots+a_nx_n^2=2A+B+k(m-4) \\ a_1x_1+\cdots+a_nx_n=B+k(m-2)  \end{cases}$$
with $n \ge 6$ is solvable over $\z_p$ for any $k \eq k_{m;\mathbf a,A,B}(p) \pmod { p^{\max \limits_{1\le i \le n} \{\ord_p(a_i)\}}}$.
\end{prop}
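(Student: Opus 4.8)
The plan is to show that the solvability of the displayed system over $\z_p$ depends only on the residue of $k$ modulo $p^s$, where $s=\max_{1\le i\le n}\ord_p(a_i)$; combined with Proposition \ref{loc.rep}(1), which guarantees that $F_m$ is universal over $\z_p$ and hence furnishes at least one $k^{\ast}\in\z_p$ making the system solvable, this produces the required residue $k_{m;\mathbf a,A,B}(p)$. So it suffices to prove the invariance statement: if the system is solvable for $k$, then it is solvable for every $k'\in k+p^s\z_p$. As in Proposition \ref{odd,nd12} I would reduce, via the identity of Section 2, to the single equation $Q_{a_1;\mathbf a}(\mathbf x-\beta\mathbf r)=c_k$ with $\beta=B+k(m-2)$ and $c_k=a_1(2A+B+k(m-4))-\beta^2(1-\sum_{i\ge2}a_ir_i)$, after reordering so that $a_1$ is a $p$-unit (possible by primitivity). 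Writing $e=\ord_p(m-2)\ge1$, the decisive new feature compared with Proposition \ref{odd,nd12} is that $p^e\parallel m-2$, so passing from $k$ to $k'=k+p^st$ moves $\beta$ only by $p^{s+e}$ times a unit, that is, by order $\ge s+e$, rather than by order equal to the modulus exponent.

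First I would record the explicit solution of the linear system defining $\mathbf r$: one checks directly that $r_i=1/\Sigma$ for all $i$, with $\Sigma=a_1+\cdots+a_n$, whence $1-\sum_{i\ge2}a_ir_i=a_1/\Sigma$ and $c_k=a_1\big(2A+B+k(m-4)-\beta^2/\Sigma\big)$. By a rank-one update (Sherman--Morrison) one gets $Q_{a_1;\mathbf a}^{-1}=a_1^{-1}\operatorname{diag}(a_i^{-1})-\tfrac{a_1^{-1}}{\Sigma}J$ with $J$ the all-ones matrix, so every entry of $Q_{a_1;\mathbf a}^{-1}$ has order $\ge-\max(s,\ord_p\Sigma)$; consequently the Jordan scales of $Q_{a_1;\mathbf a}$ are all $\le p^{\max(s,\ord_p\Sigma)}$, and $\ord_p(r_i)=-\ord_p\Sigma$.

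In the principal case $\ord_p\Sigma=0$ (which includes every form having a single $p$-unit among the $a_i$) these bounds specialize to: all Jordan scales of $Q_{a_1;\mathbf a}$ are $\le p^s$, and $\mathbf r\in\z_p^{n-1}$. Jordan-decomposing $Q_{a_1;\mathbf a}\cong\langle a_2',\ldots,a_n'\rangle$ over $\z_p$ with some $T\in GL_{n-1}(\z_p)$, the diagonal form $\langle a_2',\ldots,a_n'\rangle$ has rank $n-1\ge5$ and $\max_i\ord_p(a_i')\le s$, hence is $p^s\z_p$-universal. Now under $k\mapsto k'=k+p^st$ the center moves by $(\beta'-\beta)\mathbf r$, of order $\ge s+e>s$, while the target moves by $a_1\big(p^st(m-4)-(\beta'^2-\beta^2)/\Sigma\big)$, of order $\ge s$. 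Starting from a solution for $k$, I would adjust only the coordinates spanning the rank-$\ge5$ block: its $p^s\z_p$-universality absorbs the order-$\ge s$ change of $c_k$, while the order-$\ge s+e$ change of the (now integral) center is a strictly smaller perturbation that is likewise absorbed. This produces a solution for $k'$ and settles the invariance here.

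The main obstacle is the complementary case $\Sigma\eq0\pmod p$, i.e. $\ord_p\Sigma\ge1$, which forces at least two $p$-units among the $a_i$ and in which $\mathbf r$ acquires entries of negative order and $Q_{a_1;\mathbf a}$ a Jordan scale exceeding $p^s$ --- exactly the phenomenon treated by the negative-center analysis of Proposition \ref{odd,nd12}. Here I expect the surplus of units to be the saving feature rather than an obstruction: when at least five $a_i$ are units, Proposition \ref{odd,5} already makes the system solvable for every $k$, so any residue works; and when the number of units is two, three, or four I would argue, as in Proposition \ref{odd,nd12}, that a coordinate with negative-order center contributes a term representing an entire coset, which simultaneously absorbs the shift of the center and the change of $c_k$ within a fixed class modulo $p^s$. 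Making this last case analysis uniform --- controlling how $\ord_p\Sigma$, $e=\ord_p(m-2)$, and $s$ interact so that the residue is governed by $p^s$ alone --- is the technical heart of the argument.
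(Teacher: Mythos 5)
Your proposal is incomplete at the point you yourself identify as ``the technical heart'': the case $\ord_p(a_1+\cdots+a_n)\ge 1$ with two, three, or four $p$-adic units among the $a_i$ is never actually proved. In that case your strategy requires the invariance statement (solvability for $k$ implies solvability for every $k'\in k+p^s\z_p$, $s=\max_i\ord_p(a_i)$) to hold for \emph{whatever} $k^{\ast}$ universality hands you, and you only gesture at why the negative-order center terms would ``absorb'' both perturbations; no argument controlling the interaction of $\ord_p\Sigma$, $\ord_p(m-2)$ and $s$ is given, and it is not evident that solvability really is constant on classes mod $p^s$ there (the proposition only needs \emph{one} good class, which is weaker than your invariance claim). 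In the principal case $\ord_p\Sigma=0$ your reduction is essentially sound -- with $\mathbf r\in\z_p^{n-1}$ the translation by $\beta\mathbf r$ is a bijection of $\z_p^{n-1}$, so everything reduces to whether $Q_{a_1;\mathbf a}$ represents $c_{k'}$ given that it represents $c_k$ with $c_{k'}-c_k\in p^s\z_p$ -- but even here you should state the dichotomy explicitly: when $\ord_p(c_k)<s$ the $p^s\z_p$-universality is irrelevant and one needs $c_{k'}/c_k\in(\z_p^{\times})^2$, which does follow since $\ord_p(c_{k'}-c_k)\ge s>\ord_p(c_k)$.

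The paper's proof is structured quite differently and, in particular, disposes of your hard case by a simple device. It never passes through $Q_{a_1;\mathbf a}$: instead it applies Hensel's Lemma to the discriminant $d_{m;\mathbf a,A,B,k}=(a_1+\cdots+a_n)(2A+B+k(m-4))-(B+k(m-2))^2$, viewed as a function of $k$ (its $k$-derivative is $\eq -2\Sigma\pmod p$, a unit when $p\nmid\Sigma$), to produce a residue class mod $p^s$ on which $d\eq 0\pmod{p^s}$; then the binary lattice $[\Sigma,\,B+k(m-2),\,2A+B+k(m-4)]$ splits as $\left<\Sigma\right>\perp\left<\Sigma^{-1}d\right>$ with $\Sigma^{-1}d\in p^s\z_p$, and O'Meara 82:15 together with the $p^s\z_p$-universality of the orthogonal complement of $(1,\dots,1)$ extends the representation. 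When $p\mid\Sigma$, the paper simply discards one unit coefficient $a_{i_0}$ so that the sum of the remaining $n-1\ge 5$ coefficients is a unit and reruns the same argument on the smaller form. If you want to salvage your route, importing that single reduction (drop a unit coefficient when $p\mid\Sigma$) would eliminate the degenerate case entirely and let your principal-case analysis carry the whole proof.
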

\begin{proof}
First assume that $a_1+\cdots+a_n \not\eq 0 \pmod p$.
According to Hensel's Lemma (13:9 in \cite{O}), we may take a residue $k_{m;\mathbf a, A,B}(p)$ in $\z/p^{\max \limits_{1\le i \le n}\{\ord_p(a_i)\}} \z$ satisfying 
$$d_{m;\mathbf a, A, B,k} \eq 0 \pmod{p^{\max \limits_{1\le i \le n} \{\ord_p(a_i)\}}}$$
for any $k \eq k_{m;\mathbf a, A,B}(p) \pmod{p^{\max \limits_{1\le i \le n} \{\ord_p(a_i)\}}}$.
By the uniqueness of Jordan decomposition up to {\it Jordan type} (see 91:9 in \cite{O}), the orthogonal complement of the unary unimodular $\left<a_1+\cdots + a_n\right>\otimes \z_p$ in $\left<a_1,\cdots,a_n\right>\otimes \z_p$ of $\rank n \ge 6$ would be $p^{\max \limits_{1\le i \le n} \{\ord_p(a_i)\}}\z_p$-universal.
By using 82:15 and 91:9 in \cite{O}, we may see that for any integer
$k \eq k_{m;\mathbf a, A,B}(p) \pmod{p^{\max \limits_{1\le i \le n} \{\ord_p(a_i)\}}}$, the binary quadratic $\z_p$-lattice
$[a_1+\cdots+a_n, B+k(m-2), 2A+B+k(m-4)] \otimes \z_p$
(which is equivalent with $$\left<a_1+\cdots + a_n , (a_1+\cdots + a_n)^{-1} \cdot d_{m;\mathbf a, A, B,k} \right> \otimes \z_p$$ with the discriminant $d_{m;\mathbf a, A, B,k} \eq 0 \pmod{p^{\max \limits_{1\le i \le n} \{\ord_p(a_i)\}}}$) has a representation by $\left<a_1,\cdots,a_n\right>\otimes \z_p$ of the type
$$\begin{pmatrix}1 & 1& \cdots & 1 \\ x_1 & x_2 & \cdots & x_n \end{pmatrix} 
\begin{pmatrix} a_1 & 0 & \cdots & 0 \\ 0 & a_2 &\cdots & 0 \\\vdots & \vdots & \ddots & \vdots \\ 0 & 0 & \cdots & a_n \end{pmatrix}
\begin{pmatrix} 1&x_1\\ 1 & x_2 \\ \vdots & \vdots \\ 1 & x_n  \end{pmatrix}
$$
for some $(x_1,\cdots,x_n) \in \z_p^n$. 
This completes the claim.

When $a_1+\cdots+a_n \eq 0 \pmod p$, without loss of generality, let $0=\ord_p(a_1)$.
Then one may show that
$$\begin{cases}a_2x_2^2+\cdots+a_nx_n^2=2A+B+k(m-4) \\ a_2x_2+\cdots+a_nx_n=B+k(m-2)  \end{cases}$$
is solvable over $\z_p$ for any $k \eq k_{m;\mathbf a,A,B}(p) \pmod { p^{\max \limits_{1\le i \le n} \{\ord_p(a_i)\}}}$ where $k_{m;\mathbf a, A,B}(p)$ is the residue in  $\z/p^{\max \limits_{1\le i \le n}\{\ord_p(a_i)\}} \z$ satisfying 
$$d_{m;\mathbf a, A, B,k_{m,\mathbf a, A,B}(p)} \eq 0 \pmod{p^{\max \limits_{1\le i \le n} \{\ord_p(a_i)\}}}$$ through same arguments with the above.
This completes the proof.
\end{proof}

\vskip 0.8em

\begin{prop} \label{2,nd}
Let $m\eq 0 \pmod 4$.
If an integer $A(m-2)+B$ is represented by an $m$-gonal form $\left<a_1,\cdots,a_n\right>_m$ of $\rank n \ge 6$ over $\z_2$,
then there is a residue $k_{m;\mathbf a, A,B}(2)$ in $\z/8\cdot2^{2\ord_2(d_{a_1;\mathbf a})}\z$ for which for any $k \eq k_{m;\mathbf a,A,B}(2) \pmod { 8\cdot 2^{\ord_2(d_{a_1,\mathbf a})}}$,
$$\begin{cases}a_1x_1^2+\cdots+a_nx_n^2=2A+B+k(m-4) \\ a_1x_1+\cdots+a_nx_n=B+k(m-2)  \end{cases}$$
is solvable over $\z_2$.
\end{prop}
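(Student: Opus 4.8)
The plan is to transplant the proof of Proposition~\ref{odd,nd12} to the prime $p=2$, the two genuinely new features being that $m-2$ is no longer a $2$-adic unit and that squares of $2$-adic units are only detected modulo $8$. As there, it suffices to prove that solvability over $\z_2$ of
$$Q_{a_1;\mathbf a}(\mathbf x-(B+k(m-2))\mathbf r)=a_1(2A+B+k(m-4))-(B+k(m-2))^2\left(1-\sum_{i=2}^na_ir_i\right)$$
for a single admissible $k$ already forces solvability of the same equation with $k$ replaced by any $k'\in k+8\cdot2^{2\ord_2(d_{a_1;\mathbf a})}\z_2$; taking $k_{m;\mathbf a,A,B}(2)$ to be the residue of such a $k$ then gives the statement.

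First I would record the $2$-adic arithmetic forced by $m\eq0\pmod4$, namely $\ord_2(m-2)=1$ and $\ord_2(m-4)\ge2$, so that passing from $k$ to $k'$ perturbs both $B+k(m-2)$ and $2A+B+k(m-4)$ by quantities of $2$-adic valuation at least $2\ord_2(d_{a_1;\mathbf a})+4$. In place of the diagonalizing $T\in GL_{n-1}(\z_p)$ available for odd $p$, I would invoke the Jordan decomposition over $\z_2$ (see 91 in \cite{O}), which realizes $Q_{a_1;\mathbf a}$ as an orthogonal sum of unary blocks $\qf{a_i'}$ and binary blocks $\begin{pmatrix}2\alpha&\beta\\\beta&2\gamma\end{pmatrix}$ with $\beta$ a unit. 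Writing the shift in the Jordan basis as $(B+k(m-2))\mathbf r'$ with $\mathbf r'=T\mathbf r$, I would then separate, exactly as before, the case in which some coordinate satisfies $\ord_2((B+k(m-2))r_i')<0$: there the identity
$$\left(\z_2-(B+k'(m-2))r_i'\right)^2=((B+k(m-2))r_i')\z_2+((B+k(m-2))r_i')^2$$
(which holds because $\ord_2(r_i')\ge-\ord_2(d_{a_1;\mathbf a})$ and $\ord_2(a_i')\le\ord_2(d_{a_1;\mathbf a})$) shows that the corresponding variable absorbs the entire perturbation.

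In the complementary case, where every shifted coordinate lies in $\z_2$, I would split according to the size of the right-hand side $c_k:=a_1(2A+B+k(m-4))-(B+k(m-2))^2\left(1-\sum_{i=2}^na_ir_i\right)$. If $\ord_2(c_k)<\ord_2(d_{a_1;\mathbf a})$, the valuation estimate above gives $\ord_2(c_{k'}-c_k)\ge\ord_2(c_k)+3$, whence $c_{k'}$ and $c_k$ lie in the same $2$-adic square class and one is represented precisely when the other is; this is exactly the step that forces the factor $8=2^3$. If instead $\ord_2(c_k)\ge\ord_2(d_{a_1;\mathbf a})$, then both $c_k$ and $c_{k'}$ lie in $2^{\ord_2(d_{a_1;\mathbf a})}\z_2$, and I would conclude by noting that the Jordan form of rank $n-1\ge5$, all of whose components have scale of valuation at most $\ord_2(d_{a_1;\mathbf a})$, is $2^{\ord_2(d_{a_1;\mathbf a})}\z_2$-universal and hence represents $c_{k'}$ outright.

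The hard part will be the absence of a genuine diagonalization at $2$: the binary Jordan blocks have to be treated by hand in the negative-valuation case, and I would need to check that the two controlling inequalities $\ord_2(r_i')\ge-\ord_2(d_{a_1;\mathbf a})$ and $\ord_2(a_i')\le\ord_2(d_{a_1;\mathbf a})$, together with their binary-block counterparts, persist over $\z_2$. The remaining delicate point is purely bookkeeping of the modulus $8\cdot2^{2\ord_2(d_{a_1;\mathbf a})}$: one factor $2^{\ord_2(d_{a_1;\mathbf a})}$ clears the denominators of the $r_i'$, a second guarantees that the perturbation outgrows $\ord_2(c_k)<\ord_2(d_{a_1;\mathbf a})$, and the extra $2^3$ upgrades this to the gap of $3$ needed to keep two $2$-adic units in one square class.
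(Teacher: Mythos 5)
Your proposal follows essentially the same route as the paper: the paper's own proof is likewise a reduction to the argument of Proposition~\ref{odd,nd12} after a Jordan decomposition over $\z_2$, using exactly the ingredients you list (the $2^{\ord_2(d_{a_1;\mathbf a})}\z_2$-universality of the rank $n-1\ge5$ part, the mod~$8$ criterion for unit square classes, and the identities for $(\z_2-q)^2$ when $\ord_2(q)<0$). The one point you flag as "the hard part" --- the binary Jordan blocks --- is handled in the paper by explicitly diagonalizing $\begin{pmatrix}0&1\\1&0\end{pmatrix}$ and $\begin{pmatrix}2&1\\1&2\end{pmatrix}$ over $\q_2$ via basis changes with entries in $\tfrac12\z_2$, the resulting half-integral denominators being absorbed into the extra factors of $2$ in the modulus.
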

\begin{proof}
After Jordan decomposing (see 91 in \cite{O}) $$\begin{pmatrix} 
a_1a_2+a_2^2 & a_2a_3 &\cdots & a_2a_n\\
a_2a_3 & a_1a_3+a_3^2 & \cdots & a_3a_n\\
\vdots & \vdots & \ddots  & \vdots \\
a_2a_n & a_3a_n & \cdots &a_1a_n+a_n^2
\end{pmatrix}\otimes \z_2,$$
through similar arguments with the proof of Proposition \ref{odd,nd12}, one may prove this by showing that if the equation
$$Q_{a_1 ; \mathbf a}(\mathbf x-(B+k(m-2))\mathbf r)=a_1(2A+B+k(m-2))-(B+k(m-2))^2 \cdot \left(1-\sum _{i=2}^na_ir_i\right)$$
is solvable over $\z_2$, then the equation
$$Q_{a_1 ; \mathbf a}(\mathbf x-(B+k'(m-2))\mathbf r)=a_1(2A+B+k'(m-2))-(B+k'(m-2))^2 \cdot \left(1-\sum _{i=2}^na_ir_i\right)$$
is also solvable over $\z_2$ for any $k' \in k+8\cdot2^{2\ord_2(d_{a_1;\mathbf a})}\z_2$.
One may use that 
$Q_{a_1;\mathbf a} \otimes \z_2$ is $2\cdot 2^{\ord_2(d_{a_1;\mathbf a})}\z_2$-universal,
for $u,u' \in \z_2^{\times}$,
$$u' \in u (\z_2)^2 \text { if and only if } u\eq u' \pmod 8,$$
for $q \in \q_2$,
$$\begin{cases}
\left(\z_2-q \right)^2= 4q\z_2 + q^2  & \text{if } \ord_2(q)=-1\\
\left(\z_2-q\right)^2= 2q\z_2 + q^2 & \text{if } \ord_2(q)<-1,
\end{cases}$$
and
$$\begin{cases}
\begin{pmatrix}
0 & 1 \\ 1 & 0
\end{pmatrix}
=

\begin{pmatrix}
\frac{1}{2} & \frac{1}{2} \\ \frac{1}{2} &- \frac{1}{2}
\end{pmatrix}^t

\begin{pmatrix}
2 & 0 \\ 0 & -2
\end{pmatrix}

\begin{pmatrix}
\frac{1}{2} & \frac{1}{2} \\ \frac{1}{2} &- \frac{1}{2}
\end{pmatrix}

\\

\begin{pmatrix}
2 & 1 \\ 1 & 2
\end{pmatrix}
=\begin{pmatrix}
1 & \frac{1}{2} \\ 0 & \frac{1}{2}
\end{pmatrix}^t

\begin{pmatrix}
2 & 0 \\ 0 & 6
\end{pmatrix}

\begin{pmatrix}
1 & \frac{1}{2} \\ 0 & \frac{1}{2}
\end{pmatrix}.

\end{cases}$$

\end{proof}

\vskip 0.8em

\begin{prop} \label{2,d}
Let $m \not\eq 0 \pmod 4$.
For any integer $A(m-2)+B$, there is a residue $k_{m;\mathbf a,A,B}(2)$ in $\z /2\cdot2^{\max \limits_{1\le i \le n} \{\ord_2(a_i)\}} \z$ for which for any $k \eq k_{m;\mathbf a,A,B}(2) \pmod {2\cdot 2^{\max \limits_{1\le i \le n} \{\ord_2(a_i)\}}}$,
$$\begin{cases}
    a_1x_1^2+\cdots+a_nx_n^2=2A+B+k(m-4)\\
    a_1x_1+\cdots+a_nx_n=B+k(m-2)
    \end{cases}$$ with $n \ge 6$
    is solvable over $\z_2$.
\end{prop}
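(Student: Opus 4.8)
The plan is to adapt the argument of Proposition \ref{odd,d} to the dyadic prime, importing the even-prime technology already assembled in the proof of Proposition \ref{2,nd}. Write $s=a_1+\cdots+a_n$ and $t=\max_{1\le i\le n}\{\ord_2(a_i)\}$, and for the target $A(m-2)+B$ abbreviate the discriminant of the binary $\z_2$-lattice $[\,s,\;B+k(m-2),\;2A+B+k(m-4)\,]$ by
$$d_{m;\mathbf a,A,B,k}=s\,(2A+B+k(m-4))-(B+k(m-2))^2.$$
As recorded in Section 2, the system is solvable over $\z_2$ exactly when this binary lattice is represented by $\langle a_1,\cdots,a_n\rangle\otimes\z_2$ in such a way that $(1,\cdots,1)$ (of norm $s$) is the image of its first basis vector, so the whole proposition is a statement about representing this binary lattice. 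By Proposition \ref{loc.rep}(2) the ambient form is $\z_2$-universal, which is the qualitative shadow of what we prove quantitatively.

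First I would dispose of the parity of $s$ exactly as in Proposition \ref{odd,d}. If $s$ is even, then since the form is primitive there is an odd $a_i$, say $\ord_2(a_1)=0$; discarding the first variable (that is, setting $x_1=0$) reduces us to the subsystem in $a_2,\cdots,a_n$, whose coefficient sum $s-a_1$ is odd and whose rank is still $\ge 5$. Thus it suffices to treat the case $s$ odd, where $\langle s\rangle$ is unary unimodular and, by the dyadic Jordan theory (91 in \cite{O}) together with 82:15 in \cite{O}, splits off: $\langle a_1,\cdots,a_n\rangle\otimes\z_2=\z_2(1,\cdots,1)\perp M$ with $M$ of rank $\ge 4$ whose Jordan components have scales among $2^0,\cdots,2^{t}$. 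Completing the square against the unit vector $(1,\cdots,1)$, representing the binary lattice with that vector first is then equivalent to $M$ representing $s^{-1}d_{m;\mathbf a,A,B,k}$.

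Next I would pin down the residue of $k$. A direct computation shows that for $s$ odd the discriminant $d_{m;\mathbf a,A,B,k}$ is automatically even, and that $d_{m;\mathbf a,A,B,k}\bmod 2\cdot 2^{t}$ depends only on the class of $k$ modulo $2\cdot 2^{t}$. Moreover $\partial_k d_{m;\mathbf a,A,B,k}\equiv s(m-4)\pmod 2$: when $m$ is odd this is a unit, so Hensel's Lemma (13:9 in \cite{O}) produces a residue $k_{m;\mathbf a,A,B}(2)$ modulo $2\cdot 2^{t}$ with $d_{m;\mathbf a,A,B,k}\equiv 0\pmod{2\cdot 2^{t}}$ for every $k$ in that class; when $m\equiv 2\pmod 4$ one has $\ord_2(\partial_k d_{m;\mathbf a,A,B,k})=1$, so the same lifting applied to $\tfrac12 d_{m;\mathbf a,A,B,k}$ (whose $k$-derivative is now a unit) furnishes the residue. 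Either way we obtain a class of $k$ modulo $2\cdot 2^{t}$ along which $s^{-1}d_{m;\mathbf a,A,B,k}\in 2\cdot 2^{t}\z_2$.

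The crux, and the step I expect to be the main obstacle, is the dyadic universality statement that takes the place of the clean $p^{\max\ord_p(a_i)}\z_p$-universality used for odd primes: namely that the complement $M$ (of rank $\ge 4$, maximal scale $2^{t}$) represents every element of $2\cdot 2^{t}\z_2$. Over $\z_2$ this is genuinely more delicate, since unimodular and modular pieces need not be diagonal and square classes are detected only modulo $8$; concretely it must be proved using the same devices marshalled in the proof of Proposition \ref{2,nd} — the criterion $u'\in u(\z_2)^2\iff u\equiv u'\pmod 8$ for units, the valuation identities for $(\z_2-q)^2$, and the diagonalizations of $\begin{pmatrix}0&1\\1&0\end{pmatrix}$ and $\begin{pmatrix}2&1\\1&2\end{pmatrix}$ over $\q_2$ — with the rank-$4$ case (which arises only when $s$ was even) being the tightest. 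Granting this, $M$ represents $s^{-1}d_{m;\mathbf a,A,B,k}$ for every $k$ in the chosen class, and recombining with the split unit vector via 82:15 and 91:9 in \cite{O} yields the desired representation of the binary lattice, hence a $\z_2$-solution of the system, for all $k\equiv k_{m;\mathbf a,A,B}(2)\pmod{2\cdot 2^{t}}$.
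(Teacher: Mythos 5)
Your proposal follows the paper's proof essentially step for step: reduce to the case of odd coefficient sum by discarding an odd $a_i$, split off the unary unimodular $\left<a_1+\cdots+a_n\right>\otimes\z_2$, force $d_{m;\mathbf a,A,B,k}\eq 0 \pmod{2\cdot 2^{\max_i\ord_2(a_i)}}$ on a fixed residue class of $k$, and conclude via the $2\cdot 2^{\max_i\ord_2(a_i)}\z_2$-universality of the orthogonal complement together with 82:15 and 91:9 of \cite{O}. The only immaterial deviations are that you locate the residue of $k$ by a Hensel/Newton lift of $d\eq 0$ where the paper instead produces an exact $2$-adic root $k_2$ of $d=0$, and that the dyadic universality of the complement, which you correctly flag as the crux, is likewise asserted rather than proved in detail in the paper.
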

\begin{proof}
Without loss of generality, let $0=\ord_2(a_1) \le \ord_2(a_2) \le \cdots \le \ord_2(a_n)$.
First, assume that $a_1+\cdots+a_n$ is odd.
For $m\not\eq 0 \pmod4$ and $a_1+\cdots+a_n$ is odd, we may take $k_2 \in \z_2$ for which
$$d_{m;\mathbf a,A,B,k_2}:=(a_1+\cdots+a_n)(2A+B+k_2(m-4))-(B+k_2(m-2))^2=0.$$
Put the residue $k_{m;\mathbf a,A,B}(2)$ in $\z /2\cdot2^{\max \limits_{1\le i \le n} \{\ord_2(a_i)\}}\z$ as $k_{m;\mathbf a,A,B}(2) \eq k_2  \pmod {2\cdot2^{\max \limits_{1\le i \le n} \{\ord_2(a_i)\}}\z_2}$.
For $k \eq k_{m;\mathbf a,A,B}(2) \pmod {2\cdot 2^{\max \limits_{1\le i \le n} \{\ord_2(a_i)\}}}$, unary unimodular $\left<a_1+\cdots+a_n\right>\otimes \z_2$ splits the binary quadratic $\z_2$-lattice
$$\begin{pmatrix}a_1+\cdots+a_n & B+k(m-2) \\ B+k(m-2) & 2A+B+k(m-4) \end{pmatrix} \otimes \z_2$$
with discriminant $d_{m;A,B,k} \eq 0 \pmod{2\cdot2^{\max \limits_{1\le i \le n} \{\ord_2(a_i)\}}}$.
On the other words, $[a_1+\cdots+a_n, B+k(m-2),2A+B+k(m-4)]\otimes \z_2$ is isometric to diagonal $\left<a_1+\cdots+a_n,(a_1+\cdots+a_n)^{-1}\cdot d_{m;A,B,k}\right>\otimes \z_2$.
The orthogonal complement of unary unimodular $\left<a_1+\cdots+a_n\right>\otimes \z_2$ in $\left<a_1,\cdots,a_n\right>\otimes \z_2$ of $\rank n \ge 6$ would represent every $2$-adic integer in $2\cdot 2^{\max \limits_{1\le i \le n} \{\ord_2(a_i)\}}\z_2$.
From 82:15 in \cite{O}, we may see that $$\begin{pmatrix}a_1+\cdots+a_n & B+k(m-2) \\ B+k(m-2) & 2A+B+k(m-4) \end{pmatrix} \otimes \z_2$$
has a representation by $\left<a_1,\cdots,a_n\right>\otimes \z_2$ of the type
$$\begin{pmatrix}1 & 1& \cdots & 1 \\ x_1 & x_2 & \cdots & x_n \end{pmatrix} 
\begin{pmatrix} a_1 & 0 & \cdots & 0 \\ 0 & a_2 &\cdots & 0 \\\vdots & \vdots & \ddots & \vdots \\ 0 & 0 & \cdots & a_n \end{pmatrix}
\begin{pmatrix} 1&x_1\\ 1 & x_2 \\ \vdots & \vdots \\ 1 & x_n  \end{pmatrix}
$$
for some $(x_1,\cdots,x_n) \in \z_2^n$.
It completes the proof when $a_1+\cdots+a_n$ is odd.

When $a_1+\cdots+a_n$ is even, one may show that
$$\begin{cases}
    a_2x_2^2+\cdots+a_nx_n^2=2A+B+k(m-4)\\
    a_2x_2+\cdots+a_nx_n=B+k(m-2)
    \end{cases}$$
    is solvable over $\z_2$ for any integer $k \eq k_{m;\mathbf a,A,B}(2) \pmod {2\cdot2^{\max \limits_{1\le i \le n} \{\ord_2(a_i)\}}}$ where $k_{m;\mathbf a,A,B}(2)$ is the residue in $\z/2\cdot 2^{\max \limits_{1\le i \le n} \{\ord_2(a_i)\}}\z$ satisfying 
    $$d_{m;(a_2,\cdots,a_n),A,B,k_{m;\mathbf a,A,B}(2)} \eq 0 \pmod{2\cdot2^{\max \limits_{1\le i \le n} \{\ord_2(a_i)\}}}$$
    similarly with the above.
\end{proof}

\vskip 0.8em

\section{Representing integer which is locally represented}
\begin{lem}\label{main lem}
There is a constant $K(a_1,\cdots,a_n)>0$ which is dependent only on $a_1,\cdots, a_n$ for which
for any integer $A(m-2)+B$ with $0\le B \le m-3$ which is locally represented by an $m$-gonal form $\left<a_1,\cdots, a_n\right>_m$ of $\rank n \ge 6$, there is $0 \le k \le K(a_1,\cdots,a_n)-1$ such that
$$\begin{cases}a_1x_1^2+\cdots+a_nx_n^2=2A+B+k(m-4) \\ a_1x_1+\cdots+a_nx_n=B+k(m-2)  \end{cases}$$
is locally solvable.
\end{lem}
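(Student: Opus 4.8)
The plan is to reduce the global local solvability of the system to a finite collection of congruence conditions on $k$, one attached to each prime, and then to assemble a single admissible $k$ inside the window $[0,K(a_1,\cdots,a_n)-1]$ by the Chinese Remainder Theorem. The engine is Propositions \ref{odd,5}--\ref{2,d}: for each prime $p$ they guarantee, once $A(m-2)+B$ is locally represented, that the system over $\z_p$ is solvable for \emph{every} $k$ lying in a fixed residue class modulo a power of $p$ whose exponent is controlled by $a_1,\cdots,a_n$ alone. The feature I would exploit is that these moduli do not depend on $m$, $A$, or $B$, and that they are trivial for all but finitely many $p$.

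I would treat the primes case by case according to the hypotheses of those propositions. For an odd prime $p$ with $(p,m-2)=1$, the assumption that $A(m-2)+B$ is locally represented supplies exactly the input needed for Proposition \ref{odd,nd12}, which produces a residue $k_{m;\mathbf a,A,B}(p)$ modulo $p^{2\ord_p(d_{a_1;\mathbf a})}$ making the system solvable over $\z_p$; this modulus is $1$ whenever $p\nmid d_{a_1;\mathbf a}$, and Proposition \ref{odd,5} confirms the unconditional solvability at the generic primes where at least five of the $a_i$ are units. For an odd prime $p\mid m-2$, Proposition \ref{odd,d} gives instead a residue modulo $p^{\max_i\ord_p(a_i)}$, which is again $1$ unless $p$ divides some $a_i$. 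At $p=2$, exactly one of Proposition \ref{2,nd} (when $m\eq 0\pmod 4$) or Proposition \ref{2,d} (when $m\not\eq 0\pmod 4$) applies, with modulus a bounded power of $2$ of the shape $8\cdot 2^{2\ord_2(d_{a_1;\mathbf a})}$ or $2\cdot 2^{\max_i\ord_2(a_i)}$. In each case the $p$-adic local representability required as a hypothesis is furnished either by the global assumption or, for odd $p\mid m-2$ and for $p=2$ with $m\not\eq0\pmod4$, by the universality statements in Proposition \ref{loc.rep}.

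I would then set
\begin{equation*}
K(a_1,\cdots,a_n)=2^{\,e_2}\prod_{p\ \mathrm{odd}}p^{\,e_p},\qquad
e_p=\max\{\,2\ord_p(d_{a_1;\mathbf a}),\ \max_{1\le i\le n}\ord_p(a_i)\,\}\ \ (p\ \mathrm{odd}),
\end{equation*}
with $e_2=\max\{\,3+2\ord_2(d_{a_1;\mathbf a}),\ 1+\max_i\ord_2(a_i)\,\}$; since $d_{a_1;\mathbf a}\ne 0$ and each $a_i\ne 0$, all but finitely many exponents $e_p$ vanish, so $K(a_1,\cdots,a_n)$ is a finite product depending only on $a_1,\cdots,a_n$. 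For the given $m$, each prime contributes a congruence $k\eq k_{m;\mathbf a,A,B}(p)\pmod{p^{f_p}}$ with $f_p\le e_p$, and the finitely many moduli $p^{f_p}$ are pairwise coprime; by the Chinese Remainder Theorem there is a residue modulo $\prod_p p^{f_p}\mid K(a_1,\cdots,a_n)$ satisfying all of them at once, hence an integer $k$ with $0\le k\le K(a_1,\cdots,a_n)-1$ for which the system is simultaneously solvable over every $\z_p$, i.e.\ locally solvable.

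The part that needs the most care is the uniform control of the moduli. What makes the argument work is precisely that Propositions \ref{odd,nd12}--\ref{2,d} phrase the $k$-dependence as a congruence whose modulus is a power of $p$ with $m$-, $A$-, $B$-independent exponent, even though the residues $k_{m;\mathbf a,A,B}(p)$ themselves vary with $m,A,B$. Since $K$ must be fixed before $m$ is known, and since which of the two odd-prime propositions (resp.\ which of the two $p=2$ propositions) applies is dictated by whether $p\mid m-2$ (resp.\ by $m\bmod 4$), the exponent $e_p$ is taken as the maximum of the two possible exponents at $p$; I must also check that for each fixed $m$ every prime falls under exactly one applicable proposition, so the conditions are consistent and the primes carrying a nontrivial condition stay inside the fixed finite set of divisors of $2\,d_{a_1;\mathbf a}\,a_1\cdots a_n$. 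Granting this, the Chinese Remainder step is routine and confines $k$ to the $m$-independent window $[0,K(a_1,\cdots,a_n)-1]$.
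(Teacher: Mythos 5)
Your proposal is correct and follows essentially the same route as the paper: the paper's proof likewise combines Propositions \ref{loc.rep}--\ref{2,d} via an (implicit) Chinese Remainder argument, taking $K(a_1,\cdots,a_n)=\prod_{p\in\{2\}\cup T}p^{\max\{\ord_p(8(d_{a_1;\mathbf a})^2),\,\ord_p(2a_1),\cdots,\ord_p(2a_n)\}}$, whose exponents coincide with your $e_p$ and $e_2$. The only difference is cosmetic: you spell out the case analysis and the CRT step that the paper leaves implicit, and your product ranges over all primes rather than only the finite exceptional set, which merely enlarges $K$ harmlessly.
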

\begin{proof}
Define the finite set $T$ as the set of all odd primes $p$ for which the number of units of $\z_p$ in $\{a_1,\cdots, a_n\}$ by admitting a recursion is less than $5$.
Then the lemma holds for $$K(a_1,\cdots,a_n):=\prod \limits _{p \in \{2\} \cup T}p^{\max\{\ord_p(8\cdot (d_{a_1;\mathbf a})^2),\ord_p(2a_1),\cdots,\ord_{p}(2a_n)\}}$$
by Proposition \ref{loc.rep}, Proposition \ref{odd,5}, Proposition \ref{odd,nd12}, Proposition \ref{odd,d}, Proposition \ref{2,nd}, and Proposition \ref{2,d}.
\end{proof}

\vskip 1em

\begin{proof}[proof of Theorem \ref{main thm}]
Recall that for an integer $A(m-2)+B$ with $0\le B \le m-3$ such that
the system (\ref{eq1}) is locally solvable, if 
$$A \ge \frac{1}{2a_1} \left(C(a_1,\cdots,a_n)+(B+k(m-2))^2 \cdot \left(1-\sum _{i=2}^na_ir_i\right)\right)-B-k(m-2)$$
where $C(a_1,\cdots,a_n)$ is the constant in (\ref{C}), then $A(m-2)+B$ is represented by the $m$-gonal form $\left<a_1,\cdots,a_n\right>_m$.
Now we define $N(a_1,\cdots,a_n)$ as 
$$\frac{1}{2a_1}\left(C(a_1,\cdots,a_n)+K(a_1,\cdots,a_n)^2\cdot \left(1-\sum_{i=2}^na_ir_i\right)\right).$$
Then by Lemma \ref{main lem}, we complete the first claim.

One may notice that
$$(a_1+\cdots+a_n)(a_1x_1^2+\cdots+a_nx_n^2) \ge (a_1x_1+\cdots+a_nx_n)^2$$
and 
$$a_1P_m(x_1)+\cdots+a_nP_m(x_n) \eq a_1x_1+\cdots+a_nx_n \pmod{m-2}.$$
Therefore an integer $A(m-2)+B$ with $0\le B \le m-2$ would be never represented by $\left<a_1,\cdots,a_n\right>_m$ if 
$$A<\begin{cases} \frac{B^2}{2(a_1+\cdots+a_n)}+\frac{B}{2} & \text{when } B \le \frac{m-2}{2}\\
\frac{(B-(m-2))^2}{2(a_1+\cdots+a_n)}+\frac{(B-(m-2))}{2} & \text{when } B > \frac{m-2}{2}.
\end{cases}$$
Which implies that the cubic with respect to $m$ in (\ref{main eq}) is optimal.
\end{proof}

\begin{rmk}
Recall that every $m$-gonal form of $\rank \ge 5$ is almost regular.
Indeed, the $5$ is optimal, i.e., there is a non-almost regular $m$-gonal form of $\rank 4$.
By Proposition \ref{loc.rep}, the $m$-gonal $\left<1,1,1,1\right>_m$ is locally universal for all $m \ge 3$, but $\left<1,1,1,1\right>_m$ with $m \eq 0 \pmod 4$ and $m > 8$ is not almost universal (see Theorem 1.2 in \cite{K}).
Theorem \ref{main thm} is missing the sufficiently large integers which are represented by (almost regular) $m$-gonal form of $\rank 5$.
Filling out the remaining answer on sufficiently large integers which are represented by $m$-gonal form of $\rank 5$ could be a challenging problem.
\end{rmk}


\begin{thebibliography}{abcd}


\bibitem {CO} W. K. Chan, B.-K. Oh, {\em Representations of integral quadratic polynomials},
Contemp. Math. 587 (2013), 31–46.

\bibitem {KL} B. Kane, J. Liu, {\em Universal sums of $m$-gonal numbers}, Int. Math. Res. Not. (2019)

\bibitem {KP'} B. M. Kim, D. Park, {\em A finiteness theorem for universal $m$-gonal forms}, preprint.

\bibitem {reg} B. M. Kim, D. Park, {\em Regular $m$-gonal forms}, preprint.

\bibitem {K} D. Kim, {\em Weighted sums of generalized polygonal numbers with coefficients 1 or 2}, preprint.

\bibitem {O1}  O. T. O'Meara, {\em The Integral representations of quadratic forms over local fields}, Amer. J. of Math, \textbf{80}(1958), 843-878.

\bibitem {O} O. T. O'Meara, {\em Introduction to quadratic forms}, Springer-Verlag, New York, (1963).

\bibitem {MS} X.-Z. Meng and Z.-W. Sun, {\em Sums of four polygonal numbers with coefficients}, Acta Arith.
180(2017), 229-249.

\bibitem {N1} M. B. Nathanson, {\em A short proof of Cauchy’s polygonal number theorem}, Proc. Amer.
Math. Soc. 99 (1987), 22–24.

\bibitem {N2} M. B. Nathanson, {\em Additive Number Theory: The Classical Bases}, Grad. Texts in
Math., Vol. 164, Springer, New York, (1996).


\bibitem {P} D. Park, {\em Universal m-gonal forms}, Ph.D. Thesis, Seoul National University. (2020).

\bibitem {det} D. Park, {\em Determining universality of $m$-gonal form with first five coefficients}, preprint.



 

\end{thebibliography}
\end{document}